\numberwithin{equation}{section}
\numberwithin{figure}{section}
\theoremstyle{plain}
\newtheorem{thm}{\protect\theoremname}[section]
  \theoremstyle{plain}
  \newtheorem{prop}[thm]{\protect\propositionname}
  \theoremstyle{definition}
  \newtheorem{defn}[thm]{\protect\definitionname}
  \theoremstyle{remark}
  \newtheorem{rem}[thm]{\protect\remarkname}
  \theoremstyle{plain}
  \newtheorem{lem}[thm]{\protect\lemmaname}
  \theoremstyle{definition}
  \newtheorem{cor}[thm]{\protect\corollaryname}
  \theoremstyle{plain}
  \providecommand{\definitionname}{Definition}
  \providecommand{\examplename}{Example}
  \providecommand{\lemmaname}{Lemma}
  \providecommand{\propositionname}{Proposition}
  \providecommand{\remarkname}{Remark}
  \providecommand{\corollaryname}{Corollary}
\providecommand{\theoremname}{Theorem}
\begin{document}

\title{Simplicial localization of homotopy algebras over a prop}

\address{Laboratoire Paul Painlevé, Université de Lille 1, Cité Scientifique,
59655 Villeneuve d'Ascq Cedex, France}
\email{Sinan.Yalin@math.univ-lille1.fr}

\author{Sinan Yalin}
\begin{abstract}
We prove that a weak equivalence between two cofibrant (colored) props in chain complexes induces
a Dwyer-Kan equivalence between the simplicial localizations of the associated categories of algebras.
This homotopy invariance under base change implies
that the homotopy category of homotopy algebras over a prop $P$ does not depend on
the choice of a cofibrant resolution of $P$, and gives thus a coherence to the notion of algebra up to homotopy
in this setting. The result is established more generally for algebras in combinatorial monoidal dg categories.

\textit{Keywords} : props, bialgebras category, simplicial localization, $(\infty,1)$-category,
homotopical algebra, homotopy invariance.

\textit{AMS} : 18G55 ; 18D50 ; 18D10 ; 55U10.

\end{abstract}
\maketitle

\tableofcontents{}

\section{Introduction}

Props are combinatorial devices introduced by MacLane in \cite{MLa} in order to parametrize operations with multiple inputs
and outputs. The usual categories of bialgebras, like the classical associative bialgebras, the Lie bialgebras, ...,
which are defined by combining an algebra and a coalgebra structure constrained by a distribution relation (see \cite{Lod} for many examples),
can be modelled by props. The category of Frobenius bialgebras gives another example of a bialgebra structure
modelled by a prop (of a more general form).
Under suitable conditions on the ambient category, the category of props admits
a model category structure \cite{Fre}.
Let $P$ be a prop. We define a $P$-algebra up to homotopy, or homotopy $P$-algebra, as an algebra over a cofibrant resolution $Q(P)$ of $P$.
When going from a $P$-algebra structure to a $Q(P)$-algebra structure, the relations defining the $P$-algebra structure
are relaxed and are only satisfied up to a (whole) set of (coherent) homotopies.
This definition involves the choice of a cofibrant resolution, so a natural coherence request is to ask for
the homotopy theory of homotopy algebras to not depend on this choice.

Homotopy algebras structures appear in various context, in particular when transferring an algebra structure along
a weak equivalence, or when studying realization problems, that is, given a $P$-algebra structure on the homology
$H_*X$, the finer $Q(P)$-algebra structures on $X$ inducing this $P$-algebra in homology.
A concrete example is the Frobenius bialgebra structure on the cohomology of an oriented compact
manifold with coefficients in a field. Other kinds of algebraic structures coded
by props are for instance Lie bialgebras or conformal field theories.

\medskip

\noindent
\textit{The main result.}
Let $(C,W)$ be a pair of categories such that $W$ is a subcategory of weak equivalences of $C$ in the sense of \cite{DK2}.
Such a pair is called a relative category in \cite{BK1}. To every relative category one can associate its simplicial
localization \cite{DK1}, denoted by $L(C,W)$. It is a simplicial category which encodes the whole homotopy theory of $(C,W)$.
The category of connected components $\pi_0 L(C,W)$ gives the homotopy category $C[W^{-1}]$.
Our main contribution in this paper is to prove an optimal homotopical invariance result for the homotopy theory of
algebras over props, which provides a definitive answer to our coherence problem.
We work with props in the category $Ch_{\mathbb{K}}$ of chain complexes over a field $\mathbb{K}$ (dg props for short).
Let $Ch_{\mathbb{K}}^P$ denote the category of algebras over a dg prop $P$, and
$wCh_{\mathbb{K}}^P$ its subcategory of morphisms of $P$-algebras which are quasi-isomorphisms in $Ch_{\mathbb{K}}$.
The main result in this paper reads:
\begin{thm}
A weak equivalence $\varphi:P\stackrel{\sim}{\rightarrow}Q$ between cofibrant (colored) dg props induces a Dwyer-Kan equivalence
of simplicial localizations
\[
L(\varphi^*):L(Ch_{\mathbb{K}}^Q,wCh_{\mathbb{K}}^Q)\stackrel{\cong}{\rightarrow} L(Ch_{\mathbb{K}}^P,wCh_{\mathbb{K}}^P).
\]
\end{thm}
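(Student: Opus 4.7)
The plan is to realize $\varphi^*$ as the right adjoint of a Quillen equivalence between suitable (semi-)model categories of algebras, and then invoke the principle that Quillen equivalences induce equivalences of simplicial localizations. The first step is to equip $Ch_{\mathbb{K}}^P$ and $Ch_{\mathbb{K}}^Q$ with cofibrantly generated semi-model structures transferred from $Ch_{\mathbb{K}}$ along the free/forgetful adjunctions $(F_P,U_P)$ and $(F_Q,U_Q)$, with weak equivalences and fibrations detected on underlying chain complexes. The cofibrancy of the props is what legitimates this transfer: it guarantees enough $\Sigma$-flatness of the spaces $P(m,n)$, $Q(m,n)$ for the small-object argument to run and for pushouts of generating cofibrations to behave well homotopically.

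Next, I would introduce the extension-of-structure left adjoint $\varphi_!:Ch_{\mathbb{K}}^P\to Ch_{\mathbb{K}}^Q$, realized as a reflexive coequalizer of free $Q$-algebras. Since $\varphi^*$ manifestly preserves fibrations and weak equivalences, $(\varphi_!,\varphi^*)$ is a Quillen adjunction, and it remains to check the Quillen equivalence criterion: the derived unit $A\to\varphi^*\mathbb{L}\varphi_!A$ is a weak equivalence for every cofibrant $P$-algebra $A$. On a free algebra $A=F_P(X)$ with $X$ a cofibrant chain complex, the unit reduces, in arity $m$, to the map
\[
\bigoplus_n P(n,m)\otimes_{\Sigma_n} X^{\otimes n}\longrightarrow \bigoplus_n Q(n,m)\otimes_{\Sigma_n} X^{\otimes n}
\]
induced by $\varphi$, which is a quasi-isomorphism by the $\Sigma$-cofibrancy inherent to a cofibrant prop. One then promotes this to arbitrary cofibrant $A$ by transfinite induction along a cell presentation, using the gluing lemma for pushouts of cofibrations along weak equivalences.

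The final step is formal: apply the extension to the semi-model categorical setting of the Dwyer-Kan theorem that every Quillen equivalence induces an equivalence of hammock (equivalently, simplicial) localizations. Combined with the previous step, this yields the desired Dwyer-Kan equivalence $L(\varphi^*)$, with essential surjectivity coming from the fact that every $P$-algebra admits a cofibrant resolution in the image of $\varphi^*\circ\mathbb{L}\varphi_!$, and full faithfulness from the induced quasi-isomorphism on derived mapping objects.

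The principal obstacle is the cell-attachment analysis in the second step. In the operadic setting this is classical, but for props the simultaneous presence of multiple inputs and outputs makes the filtration of free algebras and the description of pushouts of free extensions considerably more delicate. It is precisely here that the hypothesis of cofibrancy of $P$ and $Q$ as \emph{props} (rather than some weaker $\Sigma$-flatness condition, inadequate to control the bimodule structure) is indispensable: it is what allows one to identify successive layers of the cell filtration with tensor products over symmetric groups and to verify that $\varphi$ induces a quasi-isomorphism at each stage.
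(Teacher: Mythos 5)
Your proposal founders at its very first step, and in a way that the paper itself flags as the central difficulty the whole argument is designed to circumvent. For a prop $P$ (as opposed to an operad), the forgetful functor from $P$-algebras in $Ch_{\mathbb{K}}$ to $Ch_{\mathbb{K}}$ does \emph{not} admit a left adjoint: a $P$-algebra structure on $X$ comprises operations $P(m,n)\otimes X^{\otimes m}\rightarrow X^{\otimes n}$ with multiple \emph{outputs}, and these co-operation components cannot be freely generated. The formula you write for the free algebra,
\[
\bigoplus_n P(n,m)\otimes_{\Sigma_n} X^{\otimes n},
\]
is the free algebra over an \emph{operad}; no analogue exists for props in a non-cartesian setting such as chain complexes. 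Consequently the category $Ch_{\mathbb{K}}^P$ has in general no free algebra functor, no colimits, no extension-of-structure adjoint $\varphi_!$, and no transferred (semi-)model structure, so there is no Quillen adjunction $(\varphi_!,\varphi^*)$ to analyze and no cell filtration of cofibrant algebras to induct along. This is exactly why the earlier result of Johnson and Yau requires the existence of a free algebra functor as a standing hypothesis (covering cartesian categories but excluding $Ch_{\mathbb{K}}$), and why the introduction of the paper states explicitly that the machinery of Quillen equivalences cannot be used here.

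The paper's actual route avoids any model structure on algebras. It works directly with the relative category $(Ch_{\mathbb{K}}^Q, wCh_{\mathbb{K}}^Q)$, and chains together three reductions between models of $(\infty,1)$-categories: a Dwyer--Kan equivalence of hammock localizations is equivalent (Barwick--Kan) to a Rezk equivalence of relative categories, which is detected by the functor $N_{\xi}$ landing in complete Segal spaces, which in turn is naturally Reedy-equivalent to the classification diagram $N(C,W)$ whose $n$-th level is the nerve of the weak equivalences in the diagram category $C^{[n]}$. The problem thus becomes a levelwise comparison of classification \emph{spaces} $\mathcal{N}w((\mathcal{E}^c)^{[n]})^Q\rightarrow \mathcal{N}w((\mathcal{E}^c)^{[n]})^P$, which is settled by the homotopy invariance theorem for classification spaces of algebras over cofibrant props from the author's earlier work, applied to the injective model structure on diagram categories. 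If you want to salvage your intuition, the place where the ``cofibrancy of the props controls everything'' really enters is inside that cited invariance theorem, not in any cell-attachment analysis of algebras.
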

Thus the homotopy theory (both the primary and secondary information)
of homotopy $P$-algebras does not depend on the choice of a cofibrant resolution of $P$.
Theorem 1.1 implies in particular the existence of an equivalence of homotopy categories
\[
Ch_{\mathbb{K}}^Q[(wCh_{\mathbb{K}}^Q)^{-1}] \cong Ch_{\mathbb{K}}^P[(wCh_{\mathbb{K}}^P)^{-1}].
\]
We actually need the result of Theorem 1.1 to establish this more basic relation.
Indeed, other classical methods fail. For instance, since algebras over a cofibrant prop do not form a model category
in general, their homotopy category is difficult to handle directly and we cannot use the machinery of
Quillen equivalences.
We combine simplicial localization techniques with the properties of $\infty$-categories, and elaborate on methods introduced in \cite{Yal1}
about the invariance of classification spaces in the propic setting.

\medskip

\noindent
\textit{Relation with the previous works.} Homotopy invariance for algebraic structures has a long history, going back to \cite{BV}.
There are two kinds of homotopy invariance results that one wants to establish, giving rise to two parallel stories of results.
The first sort consists in proving that homotopy algebraic structures
can be transfered over homotopy equivalences in the ground category. This goes back to Boardman and Vogt \cite{BV} in the topological setting.
It was established by numerous authors for different special cases of algebras, before the general result for homotopy algebras over dg operads by Markl \cite{Mar}.
Fresse finally proved it for props in monoidal model categories \cite{Fre}, a result extended by Johnson and Yau to the colored case \cite{JY}.

The second sort is a homotopy invariance under base change, that is an invariance of the homotopy category of homotopy algebras,
up to equivalence, under weak equivalences of cofibrant operads. It goes back to the preprint of
Getzler and Jones \cite{GJ} for dg operads and the thesis of Rezk \cite{Rez1} for simplicial operads. The general case of algebras
over cofibrant operads in monoidal model categories has been treated by Berger and Moerdijk \cite{BM1}, then extended to the
colored case \cite{BM2}. It has been extended to colored props by Johnson and Yau \cite{JY} under the strong assumption of the existence
of a free algebra functor. This covers the case of cartesian categories but excludes, for instance, chain complexes.

This paper brings a final answer for homotopy invariance under base change of algebras over cofibrant colored props in dg categories,
with new methods different from those of the preceding works (which rely on the existence of a model structure on algebras).

\medskip

\noindent
\textit{Organization of the paper.}

Section 2 consists in brief recollections about symmetric monoidal
model categories over a base category and props and algebras over a prop in this setting.
Section 3 is the heart of this paper. It has been divided in three steps, each one using properties
of $(\infty,1)$-categories to finally reduce the problem to a comparison of classification diagrams.
This comparison is proved by using a homotopy invariance result obtained in \cite{Yal1}.

\medskip

\noindent
\textit{Acknowledgements}
I would like to thank Denis-Charles Cisinski for decisive remarks on results of \cite{Yal1} which lead to this paper.

\section{Recollections on props, algebras and homotopy}

We use the following relative version of the notion of a symmetric monoidal category:
\begin{defn}
Let $\mathcal{C}$ be a symmetric monoidal category. A \emph{symmetric monoidal
category over $\mathcal{C}$} is a symmetric monoidal category $(\mathcal{E},\otimes_{\mathcal{E}},1_{\mathcal{E}})$
endowed with a symmetric monoidal functor $\eta:\mathcal{C}\rightarrow\mathcal{E}$,
that is, an object under $\mathcal{C}$ in the $2$-category of symmetric monoidal categories.

This defines on $\mathcal{E}$ an external tensor product $\otimes :\mathcal{C}\times\mathcal{E}\rightarrow\mathcal{E}$
by $C\otimes X = \eta(C)\otimes_{\mathcal{E}} X$ for every $C\in\mathcal{C}$ and $X\in\mathcal{E}$.
This external tensor product is equiped with the following unit, associativity and symmetry isomorphisms natural
in each argument:

(1) $\forall X\in\mathcal{E},1_{\mathcal{C}}\otimes X\cong X$,

(2) $\forall X\in\mathcal{E},\forall C,D\in\mathcal{C},(C\otimes D)\otimes X\cong C\otimes (D\otimes X)$,

(3) $\forall C\in\mathcal{C},\forall X,Y\in\mathcal{E},C\otimes (X\otimes Y)\cong(C\otimes X)\otimes Y\cong X\otimes(C\otimes Y)$.

The coherence constraints of these natural isomorphisms
(associativity pentagons, symmetry hexagons and unit triangles which mix both internal and external tensor products)
come from the symmetric monoidal structure of the functor $\eta$.

We assume throughout the paper that all small limits and small
colimits exist in $\mathcal{C}$ and $\mathcal{E}$, and that each of these categories admits an internal hom bifunctor. We suppose moreover the existence of an
external hom bifunctor $Hom_{\mathcal{E}}(-,-):\mathcal{E}^{op}\times\mathcal{E}\rightarrow\mathcal{C}$
satisfying an adjunction relation
\[
\forall C\in\mathcal{C},\forall X,Y\in\mathcal{E},Mor_{\mathcal{E}}(C\otimes X,Y)\cong Mor_{\mathcal{C}}(C,Hom_{\mathcal{E}}(X,Y)).
\]
\end{defn}
\begin{rem}
The assumption above is equivalent to assuming $\mathcal{E}$ is enriched and tensored over $\mathcal{C}$).
\end{rem}

When $\mathcal{C}$ is a symmetric monoidal category equiped with a model structure, we require
the following compatibility axioms:
\begin{defn}
(1) A symmetric monoidal model category is a symmetric monoidal category
$\mathcal{C}$ endowed with a model category structure such that
the following axioms hold:

\textbf{MM0.} The unit object $1_{\mathcal{C}}$ of $\mathcal{C}$ is cofibrant.

\textbf{MM1.} The pushout-product $(i_{*},j_{*}):X\otimes T\oplus_{X\otimes Z}Y\otimes Z\rightarrow Y\otimes T$
of cofibrations $i:X\rightarrowtail Y$ and $j:Z\rightarrowtail T$ is a cofibration
which is also acyclic as soon as $i$ or $j$ is so.

(2) Suppose that $\mathcal{C}$ is a symmetric monoidal model category.
A symmetric monoidal category $\mathcal{E}$ over $\mathcal{C}$ is
a symmetric monoidal model category over $\mathcal{C}$ if the axiom
MM0 holds and the axiom MM1 holds for both the internal and external tensor
products of $\mathcal{E}$.
\end{defn}

Axiom MM0 ensures the existence of a unit for the monoidal structure of the homotopy category.
Axiom MM1 provides the necessary assumptions to make the tensor product a Quillen bifunctor.
Let us mention that axiom MM0 is weakened in the usual definition of a monoidal model category. We refer the reader to \cite{Hov} for more details about the weak MM0 axiom. Here we use a stronger version because it is needed
in the construction of the model category of props in \cite{Fre}.
The category $Ch_{\mathbb{K}}$ of chain complexes over
a field $\mathbb{K}$ is our main working example of symmetric monoidal model category.
In the remaining part of the paper, we apply Definitions 2.1 and 2.3 to the case $\mathcal{C}=Ch_{\mathbb{K}}$.

\begin{defn}
A dg prop is a strict symmetric monoidal category $P$, enriched over $Ch_{\mathbb{K}}$,
with $\mathbb{N}$ as object set and the tensor product given by $m\otimes n=m+n$ on objects.
\end{defn}
\begin{rem}
A differential graded (dg) $\Sigma$-biobject is a sequence $M=\{M(m,n)\}_{m,n\in\mathbb{N}}$
of chain complexes such that each $M(m,n)$ is endowed with a left action of the symmetric group $\Sigma_m$ and
a right action of the symmetric group $\Sigma_n$ commuting with the left one.
We can see $M(m,n)$ as a space of operations with $m$ inputs and $n$ outputs, and the action of the symmetric groups
as permutations of the inputs and the outputs. We call $(m,n)$ the biarity of such an operation.

Composing operations of two $\Sigma$-biobjects $M$ and $N$ amounts to considering $2$-levelled directed graphs
(with no loops) with the first level indexed by operations of $M$ and the second level by operations of $N$.
Vertical composition by grafting and horizontal composition by concatenation allows one to give the following
alternative definition of a prop.
A prop is a $\Sigma$-biobject equiped with horizontal products
\[
\circ_{h}:P(m_{1},n_{1})\otimes P(m_{2},n_{2})\rightarrow P(m_{1}+m_{2},n_{1}+n_{2}),
\]
vertical composition products
\[
\circ_{v}:P(k,n)\otimes P(m,k)\rightarrow P(m,n)
\]
and units $1\rightarrow P(n,n)$ corresponding to identity morphisms of the objects $n\in\mathbb{N}$ in $P$. These
operations satisfy relations coming from the axioms of symmetric monoidal
categories. We refer the reader to Enriquez and Etingof \cite{EE} for an explicit description
of props in the context of modules over a ring.

A morphism of props is then simply a morphism of $\Sigma$-biobjects, that is, a collection of equivariant morphisms,
preserving both types of compositions.
\end{rem}
We denote by $\mathcal{P}$ the category of props.

There exists a free prop functor $\mathcal{F}$ fitting an adjunction
\[
\mathcal{F}:\Sigma \rightleftarrows \mathcal{P} :U
\]
and having as right adjoint the forgetful functor $U$
between the category of props and the category of $\Sigma$-biobjects.
There is an explicit construction of the free prop for which we refer the reader to Section A.2 in the
appendix of \cite{Fre}.

For a prop $P$ in $Ch_{\mathbb{K}}$, we can define the notion of $P$-algebra in a symmetric monoidal category
over $Ch_{\mathbb{K}}$:

\begin{defn}
Let $\mathcal{E}$ be a symmetric monoidal category over $Ch_{\mathbb{K}}$.

(1) The endomorphism prop of $X\in\mathcal{E}$ is given by $End_X(m,n)=Hom_{\mathcal{E}}(X^{\otimes m},X^{\otimes n})$
where $Hom_{\mathcal{E}}(-,-)$ is the external hom bifunctor of $\mathcal{E}$.

(2) Let $P$ be a prop in $Ch_{\mathbb{K}}$. A $P$-algebra in $\mathcal{E}$
is an object $X\in\mathcal{E}$ equipped with a prop morphism $P\rightarrow End_X$.
\end{defn}
\medskip{}

Now we are interested in doing homotopical algebra with props and their algebras.
Concerning props, we have to distinguish two cases. In the case where $\mathbb{K}$ is of characteristic zero,
we can use the adjunction $F:\Sigma\rightleftarrows\mathcal{P}:U$ to
transfer the cofibrantly generated model category structure on $\Sigma$-biobjects to the whole category of props
(see \cite{Fre}, Theorem 5.5). The weak equivalences and fibrations are the componentwise
quasi-isomorphisms and surjections, and the generating (acyclic) cofibrations are the images under the
free prop functor of the generating (acyclic) cofibrations of the category of diagrams $\Sigma$,
which we equip with the usual projective model structure (see \cite{Hir}, Theorem 11.6.1).

In the positive characteristic case, this construction works well only with the subcategory of
props with non-empty inputs (respectively, outputs), and does not give a full model structure,
but only a semi-model structure on this subcategory (see \cite{Fre}, Theorem 4.9 and Remark 4.10).
A prop $P$ has non-empty inputs if it satisfies $P(0,n)=\mathbb{K}$ if $n=0$
and $P(0,n)=0$ otherwise. The semi-model structure is close enough to a full model structure
to define the homotopy category and work similarly with homotopical algebra.
In the remaining part of the paper, we work in any of these settings.
In the positive characteristic case, we tacitly assume that our props satisfy the non-empty inputs requirement.

Concerning algebras over a dg prop, the situation is far more involved than in the operadic case.
In general, there is no model structure on such a category, there is even no limits or colimits
(and no free algebra functor). However, there are other ways to recover information of a homotopical nature
about algebras over a prop.
In order to get the full homotopy theory of any category with weak equivalences, one has to consider its
simplicial localization. This is the topic of the next section.

\section{Simplicial localization of algebras over cofibrant props}

Let $\mathcal{E}$ be a combinatorial symmetric monoidal model category over $Ch_{\mathbb{K}}$
and $(\mathcal{E}^c)$ its subcategory of cofibrant objects.
A combinatorial category is a cofibrantly generated model category which is also locally presentable,
see \cite{Dug} and the appendix of \cite{Lur} for the precise definition of this notion.
Let $(\mathcal{E}^c)^P$ denote the category of algebras over a dg prop $P$, and
$w(\mathcal{E}^c)^P$ its subcategory of morphisms of $P$-algebras which are weak equivalences in $\mathcal{E}$.

Let $\varphi:P\rightarrow Q$ be a morphism of dg props.
For any $Q$-algebra $X$, that is, an object $X$ of $\mathcal{E}$ equipped with a dg prop morphism $\phi_X:Q\rightarrow End_X$,
we can define a $P$-algebra having the same underlying object $X$ with the $P$-algebra structure
$\phi_X\circ\varphi:P\rightarrow Q\rightarrow End_X$.
\begin{lem}
This construction defines a functor $\varphi^*:(\mathcal{E}^c)^Q\rightarrow (\mathcal{E}^c)^P$ which restricts
to a functor $w(\mathcal{E}^c)^Q\rightarrow w(\mathcal{E}^c)^P$.
\end{lem}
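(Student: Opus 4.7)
The plan is to verify the lemma by unpacking definitions; there is no serious obstacle, since $\varphi^{*}$ does not alter the underlying object in $\mathcal{E}$, it only reinterprets the structure map.

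First I would fix notation. A $Q$-algebra is a pair $(X,\phi_{X})$ with $X\in\mathcal{E}^{c}$ and $\phi_{X}:Q\to End_{X}$ a morphism of dg props; a morphism of $Q$-algebras $f:(X,\phi_{X})\to(Y,\phi_{Y})$ is a morphism $f:X\to Y$ in $\mathcal{E}$ such that for every $(m,n)\in\mathbb{N}^{2}$ and every $q\in Q(m,n)$ the square
\[
\xymatrix{
X^{\otimes m} \ar[r]^{\phi_{X}(q)} \ar[d]_{f^{\otimes m}} & X^{\otimes n} \ar[d]^{f^{\otimes n}} \\
Y^{\otimes m} \ar[r]_{\phi_{Y}(q)} & Y^{\otimes n}
}
\]
commutes in $\mathcal{E}$. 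Equivalently, $f$ induces, via the tensor powers, a prop morphism $End_{f}:Q\to End_{X\to Y}$ factoring both $\phi_{X}$ and $\phi_{Y}$.

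Next I would verify that $\varphi^{*}$ is well-defined on morphisms. Given $f:(X,\phi_{X})\to(Y,\phi_{Y})$ in $(\mathcal{E}^{c})^{Q}$, we need to show that $f$ still intertwines the $P$-algebra structures $\phi_{X}\circ\varphi$ and $\phi_{Y}\circ\varphi$. This is immediate: for any $p\in P(m,n)$ the square for $\phi_{X}\circ\varphi, \phi_{Y}\circ\varphi$ at $p$ is exactly the square above for $q=\varphi(p)$, which commutes by hypothesis. Functoriality ($\varphi^{*}(g\circ f)=\varphi^{*}(g)\circ\varphi^{*}(f)$ and $\varphi^{*}(\mathrm{id}_{X})=\mathrm{id}_{X}$) then follows tautologically since composition and identity of morphisms of algebras are inherited from composition and identity in $\mathcal{E}$, which $\varphi^{*}$ leaves untouched. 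Cofibrancy of the target object in $\mathcal{E}$ is also preserved: since the category $(\mathcal{E}^{c})^{P}$ is defined so that cofibrancy is measured on the underlying object in $\mathcal{E}$, and $\varphi^{*}$ acts as the identity on underlying objects, $\varphi^{*}(X)$ lies in $\mathcal{E}^{c}$ as soon as $X$ does.

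Finally I would check the restriction to weak equivalences. A morphism in $w(\mathcal{E}^{c})^{Q}$ (resp.\ $w(\mathcal{E}^{c})^{P}$) is by definition a morphism of algebras whose underlying morphism in $\mathcal{E}$ is a weak equivalence; since $\varphi^{*}$ preserves underlying morphisms, it sends weak equivalences to weak equivalences. Thus $\varphi^{*}$ restricts to a functor $w(\mathcal{E}^{c})^{Q}\to w(\mathcal{E}^{c})^{P}$, as required. The only subtle point worth flagging, and essentially the single non-tautology, is the observation above that naturality with respect to all $q\in Q(m,n)$ forces naturality with respect to all $p\in P(m,n)$ via the image $\varphi(p)$; everything else is a formal consequence of $\varphi^{*}$ being defined by pre-composition of structure maps.
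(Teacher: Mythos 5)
Your proof is correct, but it is organized differently from the paper's. The paper encodes a morphism of $Q$-algebras as a pair $(f,\phi_f)$ where $\phi_f:Q\rightarrow End_f$ is a prop morphism into the endomorphism prop of $f$ (a pullback of $End_X$ and $End_Y$ over $Hom_{XY}$, following Fresse), and then spends most of its effort verifying $\varphi^*(g\circ f)=\varphi^*(g)\circ\varphi^*(f)$ by chasing the induced map $End_f\times_{End_Y}End_g\rightarrow End_{g\circ f}$. You instead observe that, because $End_f$ is a pullback, the datum $\phi_f$ is uniquely determined by $\phi_X$ and $\phi_Y$, so being a morphism of algebras is a \emph{property} of the underlying $f$ (the compatibility equation in $Hom_{XY}$) rather than extra structure; precomposition with $\varphi$ manifestly preserves this property, and functoriality, preservation of cofibrant underlying objects, and preservation of weak equivalences all become tautological. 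This buys a genuinely shorter argument and makes transparent exactly where $\varphi$ enters; the paper's structural formulation has the advantage of staying inside the language of prop morphisms and pullbacks of props, which is the form in which these categories are actually set up in \cite{Fre}. One small caveat: since $\mathcal{E}$ is only enriched over $Ch_{\mathbb{K}}$, an element $q\in Q(m,n)$ does not literally give a morphism $X^{\otimes m}\rightarrow X^{\otimes n}$ in $\mathcal{E}$, so your elementwise commuting squares should be read as the single equation of chain maps $f_*\circ\phi_X=f^*\circ\phi_Y:Q(m,n)\rightarrow Hom_{\mathcal{E}}(X^{\otimes m},Y^{\otimes n})$; your argument goes through verbatim at that level, since precomposing both sides with $\varphi(m,n)$ preserves the equality.
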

\begin{proof}
A morphism of $Q$-algebras can be encoded by the data of a morphism $f:X\rightarrow Y$ of $\mathcal{E}$
and a prop morphism $\phi_f:Q\rightarrow End_f$, where $End_f$ is the endomorphism prop of $f$, whose construction is given
by a pullback in $\Sigma$-biobjects
\[
\xymatrix{
End_f \ar[r] \ar[d] & End_X \ar[d]^{f_*} \\
End_Y \ar[r]_{f^*} & Hom_{XY}
}
\]
where $Hom_{XY}(m,n)=Hom(X^{\otimes m},Y^{\otimes n})$ (see \cite{Fre}, Proposition 7.1).
It is a particular case of the endomorphism prop of a diagram, for which we refer
the reader to \cite{Fre}, Section 6.3. We define $\varphi^*(f)$ as the $P$-algebra morphism consisting
of $f$ in the underlying category $\mathcal{E}$, equipped with the prop morphism $\phi_f\circ\varphi$.
This prop morphism is obviously compatible with $\phi_X$ and $\phi_Y$, and sends the identities
to the identities. We just have to check the compatibility with the composition.
Let $f:X\rightarrow Y$ and $g:Y\rightarrow Z$ be two morphisms of $Q$-algebras, with associated prop morphisms
$\phi_f:Q\rightarrow End_f$, respectively $\phi_g:Q\rightarrow End_g$. By Proposition 7.1 of \cite{Fre},
the projections $p^f:End_f\rightarrow End_Y$ and $p^g:End_g\rightarrow End_Y$ are prop morphisms, so we can consider
their pullback $End_f\times_{End_Y} End_g$ in the category of props. This pullback comes with a prop morphism
$End_f\times_{End_Y} End_g\rightarrow End_{g\circ f}$. The commutative square
\[
\xymatrix{
Q \ar[r]^{\phi_g} \ar[d]_{\phi_f} & End_g \ar[d]^{p^g} \\
End_f \ar[r]_{p^f} & End_Y
}
\]
induces a morphism $(\phi_f,\phi_g):Q \rightarrow End_f\times_{End_Y} End_g$, hence a morphism
$\phi_{g\circ f}:Q\rightarrow End_{g\circ f}$ which is the $Q$-algebra structure on $g\circ f$ induced by the $Q$-algebra
structures on $f$ and $g$. The morphism $\phi_{g\circ f}\circ\varphi$ is then obtained similarly
by composing $(\varphi\circ\phi_f,\varphi\circ\phi_g)$ with $End_f\times_{End_Y} End_g\rightarrow End_{g\circ f}$.
Thus we finally get $\varphi^*(g\circ f)=\varphi^*(g)\circ\varphi^*(f)$.
Since $\varphi^*$ does not change the underlying objects and morphisms of $\mathcal{E}$, it obviously
restricts to a functor $w(\mathcal{E}^c)^Q\rightarrow w(\mathcal{E}^c)^P$.
\end{proof}
The goal of this section is to prove the following homotopy invariance result,
which implies Theorem 1.1 as a special case:
\begin{thm}
A weak equivalence $\varphi:P\stackrel{\sim}{\rightarrow}Q$ between two cofibrant dg (colored) props
induces a Dwyer-Kan equivalence between the associated simplicial localizations
\[
L(\varphi^*):L^H((\mathcal{E}^c)^Q,w(\mathcal{E}^c)^Q)\stackrel{\sim}{\rightarrow} L((\mathcal{E}^c)^P,w(\mathcal{E}^c)^P).
\]
\end{thm}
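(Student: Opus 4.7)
The plan is to reduce the assertion to the homotopy invariance of classification spaces of $P$-algebras established in \cite{Yal1}, via Rezk's classification diagram model of $(\infty,1)$-categories. A Dwyer-Kan equivalence of simplicial categories corresponds, under the passage to complete Segal spaces, to a levelwise weak equivalence of classification diagrams, so it suffices to produce such a levelwise equivalence between the classification diagrams of $(w(\mathcal{E}^c)^Q)$ and $(w(\mathcal{E}^c)^P)$.

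First I would recall the Dwyer-Kan--Rezk comparison: for a relative category $(\mathcal{M},w\mathcal{M})$, the hammock localization $L^H(\mathcal{M},w\mathcal{M})$ is weakly equivalent, as an $(\infty,1)$-category, to the complete Segal space associated with the classification diagram $N(\mathcal{M},w\mathcal{M})$, whose $n$-th column is the nerve $N(w\mathcal{M}^{[n]})$ of the category of $n$-composable sequences in $\mathcal{M}$ with levelwise weak equivalences between them. A morphism of relative categories is a Dwyer-Kan equivalence on simplicial localizations if and only if it induces a weak equivalence of the corresponding complete Segal spaces, and by the Reedy/Bousfield-localization machinery this holds as soon as it induces a levelwise weak equivalence of classification diagrams. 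Thus it is enough to prove, for each $n\geq 0$, that the map
\[
N\bigl(w((\mathcal{E}^c)^Q)^{[n]}\bigr)\longrightarrow N\bigl(w((\mathcal{E}^c)^P)^{[n]}\bigr)
\]
induced by $\varphi^*$ is a weak equivalence of simplicial sets.

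Second I would interpret the $n$-th level intrinsically: an $n$-composable sequence of $P$-algebra morphisms in $\mathcal{E}$ is nothing but a $P$-algebra in the diagram category $\mathcal{E}^{[n]}$, where the endomorphism prop is constructed from the pullback appearing in Lemma~3.1 and its iterated version (the endomorphism prop of a diagram, as in \cite{Fre}, Section 6.3). The diagram category $\mathcal{E}^{[n]}$ inherits from $\mathcal{E}$ the structure of a combinatorial symmetric monoidal model category over $Ch_{\mathbb{K}}$ (taking the projective, or equivalently Reedy, model structure on $[n]$-diagrams), and weak equivalences of such diagrammatic $P$-algebras are exactly levelwise weak equivalences. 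Under this identification the $n$-th column $N(w((\mathcal{E}^c)^P)^{[n]})$ is precisely the classification space of cofibrant $P$-algebras in $\mathcal{E}^{[n]}$, and $\varphi^*$ at level $n$ is the restriction functor along $\varphi$ between these classification spaces.

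Third, I would apply the main result of \cite{Yal1}: a weak equivalence of cofibrant (colored) dg props induces a weak equivalence between the classification spaces of algebras in any combinatorial symmetric monoidal model category over $Ch_{\mathbb{K}}$. Applied to $\mathcal{E}^{[n]}$, this yields the desired levelwise equivalence, and assembling over all $n$ produces a levelwise weak equivalence of classification diagrams, hence a Dwyer-Kan equivalence $L(\varphi^*)$.

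The main obstacle, and the reason this is not a purely formal consequence of \cite{Yal1}, is the second step: verifying that the diagram categories $\mathcal{E}^{[n]}$ satisfy the hypotheses needed to apply the invariance theorem of \cite{Yal1} (combinatoriality, compatibility of the external tensor product over $Ch_{\mathbb{K}}$, existence of the external hom bifunctor), and identifying cofibrant $P$-algebras in $\mathcal{E}^{[n]}$ with genuine $n$-composable sequences of $P$-algebra morphisms whose terms are cofibrant in $\mathcal{E}$. Once this packaging is in place, the simplicial-localization comparison follows formally from the complete Segal space model of $(\infty,1)$-categories.
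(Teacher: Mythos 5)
Your proposal follows essentially the same route as the paper: reduce the Dwyer--Kan equivalence of hammock localizations to a Rezk equivalence of relative categories (Barwick--Kan), then to a weak equivalence of classification diagrams in the complete Segal space model, and finally prove the levelwise statement by viewing $[n]$-diagrams of $P$-algebras as $P$-algebras in the diagram category $\mathcal{E}^{[n]}$ and invoking the invariance of classification spaces from \cite{Yal1}. The one concrete misstep is your choice of the projective (or Reedy) model structure on $\mathcal{E}^{[n]}$: there the cofibrant objects are not all levelwise-cofibrant diagrams (for the direct category $[n]$ they are the sequences of cofibrations between cofibrant objects), so the $n$-th column of the classification diagram of $((\mathcal{E}^c)^P, w(\mathcal{E}^c)^P)$ --- whose objects are arbitrary chains of morphisms between levelwise cofibrant algebras --- would not be identified with the classification space of cofibrant algebras in $(\mathcal{E}^{[n]})_{\mathrm{proj}}$. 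The paper uses the \emph{injective} model structure precisely so that cofibrant means levelwise cofibrant, and this is where the combinatoriality hypothesis on $\mathcal{E}$ enters: by Lurie's Proposition A.2.8.2 the injective structure on $\mathcal{E}^{[n]}$ exists and is combinatorial, hence cofibrantly generated as required by Theorem 2.12 of \cite{Yal1}; one then checks (as in Proposition 3.12 and Lemma 3.13) that it is a symmetric monoidal model category over $Ch_{\mathbb{K}}$. With that substitution your argument matches the paper's proof.
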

\begin{rem}
In Theorem 1.1 we consider a functor defined for $P$-algebras in $Ch_{\mathbb{K}}$, here we extend this functor
to categories tensored and enriched over $Ch_{\mathbb{K}}$. This extension is actually straightforward.
Indeed, recall that a $Q$-algebra is the data of an object $X$ of $\mathcal{E}$ and a prop morphism $Q\rightarrow End_X$.
The functor $\varphi^*$ is defined by precomposing this morphism with $\varphi:P\rightarrow Q$, hence it is well
defined for any category $\mathcal{E}$ tensored and enriched over $Ch_{\mathbb{K}}$ (one just uses the endomorphism
prop defined by the external dg hom of $\mathcal{E}$).
\end{rem}

The proof is divided into three steps. Each step consists roughly in reducing the equivalence of Theorem 3.2,
by rewriting it in terms of different models of $(\infty,1)$-categories
until we abut to a comparison problem of bisimplicial sets.
We solve this problem by using an improved version of the homotopy invariance theorem of classification spaces
obtained in \cite{Yal1}.

Here we use three models of $(\infty,1)$-categories: complete Segal spaces (\cite{Rez2}), simplicial categories (\cite{DK1},
\cite{DK2},\cite{DK3}, \cite{Ber2})
and relative categories (\cite{BK1},\cite{BK2}). We will not enter in the details of these theories but just recall
informally, when it will be necessary, the definitions and properties we need.

\textbf{Set theoretic warning.} The construction of these models of $(\infty,1)$-categories
is a priori established in the setting of small categories,
in order to avoid set theoretic problems like simplicial proper classes instead of sets or categories which
are not locally small. However in practice one often wants to apply these results to large categories.
We adopt therefore Grothendieck's axiom of universes to sort out this issue:
for every set there exists a universe in which this set is an element. Thus there exists a universe $U$
in which the categories $(\mathcal{E}^c)^Q$ and $(\mathcal{E}^c)^P$ are $U$-small.

\subsection{Step 1. From simplicial categories to relative categories}

A simplicial category is a category enriched over simplicial sets. We denote by $SCat$ the category of simplicial categories.
There exists functorial cosimplicial resolutions and simplicial resolutions in any model category (\cite{DK3},\cite{Hir}),
so model categories provide examples of (weakly) simplicially enriched categories. One recovers the morphisms of the homotopy category from a cofibrant object to a fibrant object by taking the set of connected components of the corresponding
simplicial mapping space. Another more general example is the simplicial localization developed by Dwyer and Kan \cite{DK1}.
Let $(C,W)$ be a pair of categories such that $W$ is a subcategory of $C$ containing all the objects of $C$. We call $W$
the category of weak equivalences of $C$. Such a pair is called a relative category in \cite{BK1}. To any relative category
Dwyer and Kan associates a simplicial category $L(C,W)$ called its simplicial localization. They developed also another
simplicial localization, the hammock localization $L^H(C,W)$ \cite{DK2}. The two are actually equivalent in a sense we
are going to precise below, and each one has
its own advantages. The hammock localization satisfies certain properties of the simplicial localization only up
to homotopy, but possesses the great advantage of having an explicit description of its simplicial mapping spaces.
By taking the sets of connected components of the mapping spaces, we get $\pi_0L(C,W)\cong C[W^{-1}]$ where
$C[W^{-1}]$ is the localization of $C$ with respect to $W$ (i.e. the homotopy category of $(C,W)$).

Let us define Dwyer-Kan equivalences:
\begin{defn}
Let $C$ and $D$ be two simplicial categories. A functor $f:C\rightarrow D$ is a Dwyer-Kan equivalence
if it induces weak equivalences of simplicial sets $Map_C(X,Y)\stackrel{\sim}{\rightarrow}Map_D(FX,FY)$
for every $X,Y\in C$, as well as inducing
an equivalence of categories $\pi_0C\stackrel{\sim}{\rightarrow}\pi_0D$.
\end{defn}
In particular, every Quillen equivalence of model categories gives rise to a Dwyer-Kan equivalence of their
simplicial localizations, as well as a Dwyer-Kan equivalence of their hammock localizations
(see \cite{DK3} Proposition 5.4).
The simplicial and hammock localizations are equivalent in the following sense:
\begin{prop}(Dwyer-Kan \cite{DK2}, Proposition 2.2)
Let $(C,W)$ be a relative category.
There is a zigzag of Dwyer-Kan equivalences
\[
L^H(C,W)\leftarrow diag L^H(F_*C,F_*W)\rightarrow L(C,W)
\]
where $F_*C$ is a simplicial category called the standard resolution of $C$ (see \cite{DK1} Section 2.5).
\end{prop}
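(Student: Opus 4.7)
The plan is to verify that each of the two maps in the zigzag is a Dwyer-Kan equivalence, working with explicit models for both localizations and exploiting the specific combinatorial form of the standard resolution.

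First I would recall the construction of $F_*C$. Let $G \dashv U$ be the free-forgetful adjunction between categories with object set $\mathrm{Ob}(C)$ and directed graphs on $\mathrm{Ob}(C)$, and let $F = GU$ be the associated comonad. The standard resolution is the simplicial category $F_*C$ with $F_nC = F^{n+1}C$, face maps built from the counit and degeneracies from the comultiplication. It is augmented by $\varepsilon: F_*C \to C$, and restricting to weak equivalences produces a simplicial subcategory $F_*W \to W$. Crucially, every $F_nC$ is a free category, and after forgetting to graphs the augmentation is a simplicial homotopy equivalence, witnessed by extra degeneracies coming from the unit of the adjunction.

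For the left-hand map $\mathrm{diag}\, L^H(F_*C,F_*W)\to L^H(C,W)$, I would argue levelwise. The augmentation $\varepsilon_n$ sends $F_nW$ into $W$, hence induces a simplicial functor on hammock localizations. Using the explicit description of $L^H(-,-)(X,Y)$ as the nerve of a category of reduced hammocks, together with the extra degeneracies of the comonadic resolution, I would check that each $L^H(F_nC,F_nW)\to L^H(C,W)$ is a Dwyer-Kan equivalence: a homotopy inverse is produced functorially from the comonad structure, and the extra degeneracies give the required simplicial homotopies on mapping spaces. A standard bisimplicial realization argument (along the lines of Bousfield-Friedlander) then ensures that a levelwise weak equivalence induces a weak equivalence of diagonals on each hom bisimplicial set. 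The induced equivalence on $\pi_0$ of the underlying categories is automatic since both sides have the same objects and the same localized hom sets.

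For the right-hand map $\mathrm{diag}\, L^H(F_*C,F_*W)\to L(C,W)$, I would use that, by the definition recalled from \cite{DK1} Section 2.5, the simplicial category $L(C,W)$ is the diagonal of the bisimplicial category $F_*C[(F_*W)^{-1}]$ obtained by formally inverting $F_nW$ inside the free category $F_nC$ at each simplicial level. The key lemma to establish is therefore: for a free category $F$ equipped with a subcategory of weak equivalences $W_F$, the natural map $L^H(F,W_F)\to F[W_F^{-1}]$ to the formal localization (viewed as a discrete-hom simplicial category) is a Dwyer-Kan equivalence. Granting this, applying it levelwise and passing to diagonals yields the desired map. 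The main obstacle is the proof of this key lemma, which amounts to showing that for each formal zigzag class in $F[W_F^{-1}]$, the subcategory of reduced hammocks of $F$ representing that class has contractible nerve. Freeness of $F$ is what makes this possible: compositions of non-identity arrows never coincide with other compositions, so reductions of hammocks by contraction of identity arrows and cancellation of opposite pairs in $W_F$ can be analyzed combinatorially, and the resulting category of hammocks over a fixed target morphism is shown to be filtered (or directly shown to have a final object in each relevant filtration) by an inductive argument on hammock width.
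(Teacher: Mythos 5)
The paper offers no proof of this statement---it is quoted directly from Dwyer--Kan \cite{DK2}, Proposition 2.2---so your proposal has to be measured against the original argument. Your treatment of the right-hand map is essentially the standard one: $L(C,W)$ is by definition $\mathrm{diag}(F_*C[F_*W^{-1}])$, the key lemma is that for a free pair the natural map $L^H(F,W_F)\to F[W_F^{-1}]$ is a Dwyer--Kan equivalence (proved in \cite{DK2} by exactly the kind of combinatorial analysis of reduced hammocks you sketch, although your "filteredness" claim is left vague), and a levelwise weak equivalence of bisimplicial mapping spaces induces one on diagonals.

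The genuine gap is in your treatment of the left-hand map. The individual augmentations $L^H(F_nC,F_nW)\to L^H(C,W)$ are \emph{not} Dwyer--Kan equivalences in general, so the strategy "levelwise equivalence plus realization lemma" fails at its first step. Concretely, let $C$ have two objects $X,Y$ and mutually inverse morphisms $f\colon X\to Y$, $g\colon Y\to X$, with $W$ the identities. Then $\pi_0L^H(C,W)(X,Y)=C(X,Y)=\{f\}$, whereas $\pi_0L^H(F_0C,F_0W)(X,Y)=F_0C[F_0W^{-1}](X,Y)$ contains the distinct classes of the words $f$ and $fgf$ (send $f,g\mapsto 1$ and $1_X,1_Y\mapsto 0$ into the two-object category with all hom-sets $\mathbb{Z}$ under addition to separate them): the relation $fg=1_Y$ is absent from the free category $F_0C$ and is only restored by the face maps from $F_1C$. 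The extra degeneracies you invoke are morphisms of graphs, not functors---the unit $\eta$ does not preserve composition---so they neither provide a functorial homotopy inverse at a fixed level $n$ nor preserve the commutativity constraints that define hammocks; their role is to contract the resolution in the simplicial direction, which is precisely why the statement concerns the diagonal and not any single level. The actual proof of the left-hand equivalence in \cite{DK2} (2.3--2.6) is correspondingly more delicate, comparing the bisimplicial sets of reduced hammocks of bounded width globally rather than level by level, and your argument would need to be replaced by something of that kind.
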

\begin{rem}
This implies that Theorems 1.1 and 3.2 can be equivalently enunciated with the hammock localization $L^H(\varphi^*)$ or
with the simplicial localization $L(\varphi^*)$. Since proofs of \cite{BK2} are stated with the hammock localization,
we will do the same in our proof.
\end{rem}
By Theorem 1.1 of \cite{Ber2}, there exists a model category structure on the category of (small) simplicial categories with the Dwyer-Kan equivalences as weak equivalences.
Since every simplicial category is Dwyer-Kan equivalent to the simplicial localization of a certain relative category
(see for instance \cite{BK2}, Theorem 1.7),
this model structure forms a homotopy theory of homotopy theories.

Let us denote by $RelCat$ the category of relative categories. The objects are the relative categories
and the morphisms are the relative functors, that is, the functors restricting to functors between the
categories of weak equivalences. Lemma 3.1 tells us that $\varphi^*$ is  a well defined relative functor.

By Theorem 6.1 of \cite{BK1}, there is an adjunction between
the category of bisimplicial sets and the category of relative categories
\[
K_{\xi}:sSets^{\Delta^{op}}\leftrightarrows RelCat:N_{\xi}
\]
(where $K_{\xi}$ is the left adjoint and $N_{\xi}$ the right adjoint)
which lifts any Bousfield localization of the Reedy model structure of bisimplicial sets into a model structure on $RelCat$.
In the particular case of the Bousfield localization defining the complete Segal spaces \cite{Rez2}, one obtains
a Quillen equivalent homotopy theory of the homotopy theories in $RelCat$ \cite{BK1}.
The weak equivalences of $RelCat$ will be called, following \cite{BK1}, Rezk equivalences.
We refer the reader to Section 5.3 of \cite{BK1} for the definition of the functor $N_{\xi}$.

Recall that we want to get a Dwyer-Kan equivalence
\[
L^H(\varphi^*):L^H((\mathcal{E}^c)^Q,w(\mathcal{E}^c)^Q)\stackrel{\sim}{\rightarrow} L^H((\mathcal{E}^c)^P,w(\mathcal{E}^c)^P).
\]
According to \cite{BK2}, Theorem 1.8, a morphism of relative categories is a Rezk equivalence if and only if it induces a Dwyer-Kan equivalence of the associated hammock localizations. So we have:
\begin{prop}
The simplicial functor
\[
L^H(\varphi^*):L^H((\mathcal{E}^c)^Q,w(\mathcal{E}^c)^Q)\stackrel{\sim}{\rightarrow} L^H((\mathcal{E}^c)^P,w(\mathcal{E}^c)^P).
\]
is a Dwyer-Kan equivalence if and only if if and only the relative functor
\[
\varphi^*:((\mathcal{E}^c)^Q,w(\mathcal{E}^c)^Q)\stackrel{\sim}{\rightarrow}
((\mathcal{E}^c)^P,w(\mathcal{E}^c)^P)
\]
is a Rezk equivalence.
\end{prop}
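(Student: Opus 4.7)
The plan is to observe that this proposition is essentially an instance of a general correspondence between Rezk equivalences of relative categories and Dwyer--Kan equivalences of hammock localizations, and the only work to do is to check that our specific $\varphi^*$ fits into the hypotheses of that correspondence.

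First I would recall that Lemma 3.1 already establishes that the pair $((\mathcal{E}^c)^Q,w(\mathcal{E}^c)^Q)$ and $((\mathcal{E}^c)^P,w(\mathcal{E}^c)^P)$ are legitimate relative categories (each $w(\mathcal{E}^c)^P$ contains all identities and is closed under composition, since quasi-isomorphisms are), and that $\varphi^*$ is a relative functor between them, i.e.\ it sends weak equivalences to weak equivalences. The set-theoretic issue about these categories being large rather than small is handled by the universe convention made at the beginning of the section, so both sides live in the same universe $U$ in which the categories are small.

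Given this, I would then directly invoke Theorem 1.8 of \cite{BK2}, which asserts that for any map of relative categories $F\colon (C,W)\to (C',W')$, $F$ is a Rezk equivalence (i.e.\ a weak equivalence in the model structure on $RelCat$ inherited via the adjunction $K_\xi\dashv N_\xi$ from the complete Segal space model structure on bisimplicial sets) if and only if the induced simplicial functor $L^H(F)\colon L^H(C,W)\to L^H(C',W')$ is a Dwyer--Kan equivalence. Applying this to $F=\varphi^*$ gives the claimed equivalence.

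There is no main obstacle here: the content of the proposition is a translation, not a comparison, so the proof is a one-line reference once the setup is in place. The only care needed is the verification that \cite{BK2}'s general statement does apply in our context, i.e.\ checking that the passage from the small-category setting of loc.\ cit.\ to our possibly-large categories is harmless under the Grothendieck universe convention, and that the hammock localization on the left-hand side of Proposition 3.6 is the same object as in \cite{BK2}. Both are routine.
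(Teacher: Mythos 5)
Your proposal matches the paper's own argument: the proposition is stated as an immediate consequence of Theorem 1.8 of \cite{BK2}, with Lemma 3.1 supplying that $\varphi^*$ is a relative functor and the universe convention disposing of the size issue. Nothing further is needed.
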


\subsection{Step 2. From relative categories to complete Segal spaces}

Although simplicial categories are the most intuitive model for $(\infty,1)$-categories, Dwyer-Kan equivalences are
difficult to detect. A nice model with weak equivalences easier to handle has been developed by Rezk in \cite{Rez2},
namely the category of complete Segal spaces. We denote this category by $CSS$. It has a model structure defined by a certain left Bousfield localization of the standard Reedy model structure on bisimplicial sets (Theorem 7.2 of \cite{Rez2}). The fibrant objects of $CSS$
are precisely the complete Segal spaces. To each complete Segal space $S$ one can associate the homotopy theory of a certain
relative category: the objects are the $0$-simplices of $S_0$, and for $x,y\in S_{0,0}$ the mapping space $map_S(x,y)$
is the fiber of the product of faces $(d_0,d_1):S_1\rightarrow S_0\times S_0$ over $(x,y)$. Equivalences are then
the points corresponding to invertible components in $\pi_0 map_S(x,y)$. We refer the reader to Section 5 of \cite{Rez2}
for the detailed construction.
The Reedy weak equivalences between two complete Segal spaces are precisely the Dwyer-Kan equivalences between
their associated homotopy theories (Theorem 7.2 of \cite{Rez2}).
An important instance of complete Segal space is the Reedy fibrant resolution of the classification diagram of a simplicial model category (see Theorem 8.3 of \cite{Rez2}). The homotopy category of this space is equivalent to
the homotopy category of the model category in the usual sense.
Thus a Reedy weak equivalence of classification diagrams of simplicial model categories
corresponds to an equivalence of their homotopy categories.
The notion of classification diagram will be defined in Definition 3.9.

Recall that, following Section 5.1 and Theorem 6.1 of \cite{BK1}, the functor $N_{\xi}$
defines the weak equivalences in $RelCat$, that is, a morphism of relative categories is a Rezk equivalence
if and only if its image under $N_{\xi}$ is a weak equivalence in $CSS$.
\begin{prop}
The relative functor
\[
\varphi^*:((\mathcal{E}^c)^Q,w(\mathcal{E}^c)^Q)\stackrel{\sim}{\rightarrow}
((\mathcal{E}^c)^P,w(\mathcal{E}^c)^P)
\]
is a Rezk equivalence if and only if the morphism of bisimplicial sets
\[
N_{\xi}(\varphi^*):RN_{\xi}((\mathcal{E}^c)^Q,w(\mathcal{E}^c)^Q)\stackrel{\sim}{\rightarrow}
N_{\xi}((\mathcal{E}^c)^P,w(\mathcal{E}^c)^P)
\]
is a weak equivalence in $CSS$.
\end{prop}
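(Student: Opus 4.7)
The plan is to derive the proposition essentially tautologically from the way the model structure on $RelCat$ is constructed in \cite{BK1}. Recall that the adjunction $K_\xi : sSets^{\Delta^{op}} \rightleftarrows RelCat : N_\xi$ is used in Theorem 6.1 of \cite{BK1} to right-lift the Bousfield localization of the Reedy model structure on bisimplicial sets (namely the complete Segal space model structure) to a model structure on relative categories. In such a transferred model structure, the weak equivalences are created by the right adjoint: by definition, a relative functor $F$ is a Rezk equivalence if and only if $N_\xi(F)$ is a weak equivalence in $CSS$. The proposition is therefore just the instantiation of this fact at the particular relative functor $F=\varphi^*$.

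What remains to justify is the presence of the Reedy fibrant replacement $R$ on the source in the statement. The point is that $N_\xi$ applied to an arbitrary relative category is typically not Reedy fibrant, so to test whether $N_\xi(\varphi^*)$ is a weak equivalence in the Bousfield-localized model structure $CSS$, one replaces (at least) the source by a Reedy fibrant resolution. One would therefore first note that Rezk equivalence of $\varphi^*$ is equivalent to saying that the induced map $N_\xi((\mathcal{E}^c)^Q,w(\mathcal{E}^c)^Q) \to N_\xi((\mathcal{E}^c)^P,w(\mathcal{E}^c)^P)$ is a weak equivalence in $CSS$, then observe that this condition is insensitive to replacing either side with a Reedy fibrant resolution, yielding the form stated.

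There is no real obstacle in this step: it is a bookkeeping translation that reformulates the Dwyer-Kan equivalence problem (now reduced, by Proposition 3.7, to a Rezk equivalence problem) as a weak equivalence problem in the CSS model structure on bisimplicial sets. The genuine difficulty is deferred to the subsequent step, where one must identify $N_\xi$ of our categories of algebras with (a model of) the classification diagram, so that the homotopy invariance theorem for classification spaces from \cite{Yal1} can be brought to bear.
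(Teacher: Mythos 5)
Your proposal is correct and matches the paper's own treatment: the paper derives this proposition immediately from the fact (Section 5.1 and Theorem 6.1 of \cite{BK1}) that the weak equivalences of $RelCat$ are created by $N_{\xi}$, i.e.\ a relative functor is a Rezk equivalence precisely when its image under $N_{\xi}$ is a weak equivalence in $CSS$. Your additional remark that the Reedy fibrant replacement $R$ on the source does not affect the condition (by two-out-of-three, since the resolution map is a weak equivalence) is a harmless clarification that the paper leaves implicit.
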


\subsection{Step 3. Comparison of classification diagrams}

Let us denote by $sSets$ the category of simplicial sets.
\begin{defn}
Let $(C,W)$ be a relative category. Its classification diagram, denoted by $N(C,W)$, is the bisimplicial set
$N(C,W):\Delta^{op}\rightarrow sSets$ defined by
\[
N(C,W)([n])=\mathcal{N}we(C^{[n]})
\]
where $C^{[n]}$ is the category of diagrams over $[n] = {0\rightarrow...\rightarrow n}$.
We write $we(C^{[n]})$ for the subcategory defined by the (pointwise) weak-equivalences in this diagram category,
and $\mathcal{N}$ refers to the simplicial nerve functor (see \cite{MLa2}, Chapter XII, Section 2, for a definition
of this nerve).
\end{defn}
The simplicial set $N(C,W)([0])=\mathcal{N}W$ is the classification space of $(C,W)$.
The classification diagram construction gives rise to a functor $N:RelCat\rightarrow CSS$.

By Lemma 5.4 of \cite{BK1}, there is a natural Reedy weak equivalence
\[
N\stackrel{\sim}{\rightarrow} N_{\xi}.
\]
This implies, for any prop morphism $\varphi:P\rightarrow Q$, the existence of a commutative diagram
of bisimplicial sets
\[
\xymatrix{
N((\mathcal{E}^c)^Q,w(\mathcal{E}^c)^Q) \ar[r]^{N(\varphi^*)} \ar[d]_{\sim} & N((\mathcal{E}^c)^P,w(\mathcal{E}^c)^P)
\ar[d]_{\sim}\\
N_{\xi}((\mathcal{E}^c)^Q,w(\mathcal{E}^c)^Q) \ar[r]_{N_{\xi}(\varphi^*)} & N_{\xi}((\mathcal{E}^c)^P,w(\mathcal{E}^c)^P)
}
\]
where the vertical arrows are Reedy weak equivalences.
We have the following homotopy invariance result:
\begin{thm}
A weak equivalence of cofibrant dg props $\varphi:P\stackrel{\sim}{\rightarrow}Q$ induces a Reedy weak equivalence
of classification diagrams
\[
N(\varphi^*):N((\mathcal{E}^c)^Q,w(\mathcal{E}^c)^Q)\stackrel{\sim}{\rightarrow} N((\mathcal{E}^c)^P,w(\mathcal{E}^c)^P).
\]
\end{thm}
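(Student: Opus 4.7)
The plan is to reduce the Reedy weak equivalence to a levelwise comparison of classification spaces, and at each level invoke the homotopy invariance theorem for classification spaces of algebras over a prop established in \cite{Yal1}. Since Reedy weak equivalences of bisimplicial sets are detected levelwise, the theorem reduces to proving that for every $n \geq 0$ the induced map of simplicial sets
\[
\mathcal{N} we\bigl(((\mathcal{E}^c)^Q)^{[n]}\bigr) \longrightarrow \mathcal{N} we\bigl(((\mathcal{E}^c)^P)^{[n]}\bigr)
\]
is a weak equivalence. This is exactly a map of classification spaces (in the sense of \cite{DK1}) between the diagram categories of cofibrant $Q$- and $P$-algebras indexed by $[n]$, both equipped with their pointwise weak equivalences.

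The bridge to \cite{Yal1} goes through the endomorphism prop of a diagram introduced by Fresse and already used in the proof of Lemma 3.1. The diagram category $\mathcal{E}^{[n]}$ inherits the structure of a combinatorial symmetric monoidal model category over $Ch_{\mathbb{K}}$ (for instance with the projective model structure), and a $Q$-algebra structure on an $[n]$-diagram $F: [n] \to \mathcal{E}$ is the same as a prop morphism $Q \to End_F$, which in turn is the same as an $[n]$-diagram of $Q$-algebras. Under this identification the relative category of pointwise cofibrant $[n]$-diagrams of $Q$-algebras with pointwise weak equivalences is equivalent to the relative category of cofibrant $Q$-algebras in $\mathcal{E}^{[n]}$, and the map $N(\varphi^*)$ restricted to level $n$ is precisely the map of classification spaces induced by $\varphi: P \stackrel{\sim}{\rightarrow} Q$ acting on algebras in this enlarged setting. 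Applying the classification space invariance theorem of \cite{Yal1} to the ambient category $\mathcal{E}^{[n]}$ then yields the desired weak equivalence at level $n$, which, assembled over all $n$, gives the Reedy equivalence.

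The main obstacle lies in making the identification above rigorous enough to transport classification spaces faithfully. One has to reconcile two potentially distinct notions of cofibrancy entering the picture: \emph{cofibrant in $\mathcal{E}$ at every vertex} on the side of diagrams of $Q$-algebras, versus \emph{cofibrant as a diagram} (say projectively cofibrant) on the side of algebras in $\mathcal{E}^{[n]}$. Since every pointwise cofibrant diagram can be replaced, through a pointwise weak equivalence, by a projectively cofibrant one, the natural inclusion between the two resulting relative categories should be a Dwyer--Kan equivalence, hence induce a weak equivalence of classification spaces. Verifying this last point, together with the fact that $\mathcal{E}^{[n]}$ with its projective model structure meets the hypotheses of \cite{Yal1} (combinatoriality, existence of the external hom bifunctor, pushout-product axiom for both internal and external tensor products, cofibrancy of the unit), is the principal piece of bookkeeping before the cited result can be invoked.
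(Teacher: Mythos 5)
Your overall strategy is exactly the one the paper follows: detect the Reedy equivalence levelwise, identify the $n$-th level of the classification diagram with the classification space of algebras over $P$ and $Q$ in the diagram category $\mathcal{E}^{[n]}$ via the endomorphism prop of a diagram, and then invoke the invariance theorem of \cite{Yal1} applied to the ambient category $\mathcal{E}^{[n]}$. However, the point you flag at the end as ``the principal piece of bookkeeping'' is not bookkeeping; it is the actual content of the step, and your proposed resolution does not close it. With the projective model structure, the cofibrant objects of $\mathcal{E}^{[n]}$ form a strictly smaller class than the pointwise cofibrant diagrams, so the theorem of \cite{Yal1} applied to $(\mathcal{E}^{[n]})_{proj}$ gives an equivalence between the wrong classification spaces. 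Your suggested repair --- replace a pointwise cofibrant diagram by a projectively cofibrant one through a pointwise weak equivalence --- has to be carried out \emph{inside the category of diagrams of $Q$-algebras}, not merely in $\mathcal{E}^{[n]}$; since algebras over a prop do not form a model category, there is no functorial cofibrant replacement available there, and transporting the $Q$-algebra structure along the replacement (via Fresse's homotopy invariance) only produces a structure up to homotopy, which does not obviously yield a zigzag of functors inducing a weak equivalence of nerves. As stated, this step would fail or at least require a substantial separate argument.

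The paper avoids the issue entirely by equipping $\mathcal{E}^{[n]}$ with the \emph{injective} model structure, for which cofibrations are exactly the pointwise cofibrations, so that $((\mathcal{E}^{[n]})_{inj})^c$ coincides on the nose with the pointwise cofibrant diagrams and no replacement is needed. This is precisely where the hypothesis that $\mathcal{E}$ is combinatorial enters: Lurie's Proposition A.2.8.2 guarantees that the injective structure exists and is cofibrantly generated (as required by the theorem of \cite{Yal1}), and a separate transitivity argument (Proposition 3.12 and Lemma 3.13 in the paper) shows that $(\mathcal{E}^{[n]})_{inj}$ is a symmetric monoidal model category over $Ch_{\mathbb{K}}$, so all hypotheses of the cited invariance theorem are met. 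If you replace ``projective'' by ``injective'' in your argument and supply these two verifications, your proof becomes the paper's proof.
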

By Theorem 3.10 and the commutative square above, for any weak equivalence of cofibrant dg props $\varphi:P\stackrel{\sim}{\rightarrow}Q$,
the map $N_{\xi}(\varphi^*)$ is a Reedy weak equivalence of bisimplicial
sets. Given that $CSS$ is obtained as a left Bousfield localization of the Reedy model structure on bisimplicial sets,
Reedy weak equivalences are still weak equivalences in $CSS$ so $N_{\xi}(\varphi^*)$ is a weak equivalence
in $CSS$. By Propositions 3.7 and 3.8 this implies that $L^h(\varphi^*)$ is a Dwyer-Kan equivalence, hence concludes
the proof of Theorem 3.2.

To prove Theorem 3.10 we use the following results:
\begin{prop}[(Lurie, proposition A.2.8.2 of {\cite{Lur}})]
Let us suppose that $\mathcal{E}$ is combinatorial. Then the injective model structure on the category of diagrams $\mathcal{E}^I$,
with point-wise weak-equivalences and cofibrations, is combinatorial (in particular, it is cofibrantly generated).
\end{prop}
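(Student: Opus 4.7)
The plan is to apply Jeff Smith's recognition theorem for combinatorial model categories: given a locally presentable category, an accessible class of weak equivalences, and a set of generating cofibrations whose cell complexes behave well, one obtains a cofibrantly generated model structure. So the task reduces to verifying these input data for $\mathcal{E}^I$ equipped with pointwise weak equivalences and pointwise cofibrations.

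First, I would check that $\mathcal{E}^I$ is locally presentable. Since $\mathcal{E}$ is combinatorial, it is locally presentable by definition, and the category of diagrams over a small indexing category $I$ valued in a locally presentable category is again locally presentable (an accessibility rank $\kappa$ for $\mathcal{E}$ can be enlarged to a rank for $\mathcal{E}^I$ taking into account $|I|$). Next, the class $\mathcal{W}$ of pointwise weak equivalences satisfies $2$-out-of-$3$ and is closed under retracts, inherited objectwise from $\mathcal{E}$; crucially, $\mathcal{W}$ must also be shown to be an accessible subcategory of the arrow category $(\mathcal{E}^I)^{[1]}$. This follows from the corresponding statement for $\mathcal{E}$ (a standard consequence of $\mathcal{E}$ being combinatorial, via Dugger's presentation theorem or directly via Raptis/Rosick\'y-type accessibility arguments) and the fact that evaluation at each object of $I$ preserves accessible colimits, so the pointwise class is an intersection (indexed by the small set of objects of $I$) of accessible subcategories.

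For the generating cofibrations, I would fix a regular cardinal $\kappa$ larger than $|I|$, larger than the rank of accessibility of $\mathcal{W} \subseteq \mathcal{E}^{[1]}$, and sufficiently large that $\mathcal{E}$ is locally $\kappa$-presentable with a set of generating cofibrations whose domains and codomains are $\kappa$-presentable. Then let $\mathcal{I}$ be a set of representatives for the (essentially small) class of pointwise cofibrations in $\mathcal{E}^I$ between $\kappa$-presentable objects. By construction the $\mathcal{I}$-cell complexes sit inside the class of pointwise cofibrations, and conversely every pointwise cofibration is a $\kappa$-filtered colimit of its $\kappa$-presentable subcofibrations; this exhibits it as a retract of an $\mathcal{I}$-cell complex, giving Smith's first set-theoretic hypothesis.

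The main obstacle, which is where the argument really lives, is the bounded cofibration property: one must show that every trivial cofibration $f \colon A \to B$ in $\mathcal{E}^I$ can be exhausted by a $\kappa$-filtered system of subobjects $A \to B_\alpha \hookrightarrow B$ with $B_\alpha$ $\kappa$-presentable and each $A \to B_\alpha$ still a pointwise trivial cofibration. One builds the $B_\alpha$ by a standard inductive closure under $\kappa$-small data: start from any $\kappa$-presentable subobject and, using accessibility of $\mathcal{W}$, at each stage adjoin enough data pointwise to ensure the inclusion into $B$ becomes a pointwise weak equivalence on the current piece; the limit of this $\kappa$-small process remains $\kappa$-presentable because $\kappa$-presentable objects are closed under $\kappa$-small colimits. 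Once the bounded cofibration lemma is in hand, Smith's theorem immediately yields a cofibrantly generated model structure on $\mathcal{E}^I$ whose cofibrations are the pointwise cofibrations and whose weak equivalences are the pointwise weak equivalences, and combinatoriality follows since local presentability was established in the first step.
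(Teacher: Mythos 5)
The paper offers no proof of this statement; it is quoted verbatim as Proposition A.2.8.2 of Lurie's \emph{Higher Topos Theory}, and your argument via Smith's recognition theorem (local presentability of $\mathcal{E}^I$, accessibility of the pointwise weak equivalences as an intersection of preimages under the evaluation functors, and a set of generating cofibrations extracted from the pointwise cofibrations between $\kappa$-presentable objects) is essentially the proof given there. The only step stated loosely is the claim that writing a pointwise cofibration as a $\kappa$-filtered colimit of $\kappa$-presentable subcofibrations exhibits it as a (retract of a) relative $\mathcal{I}$-cell complex --- the correct argument is a transfinite induction producing a genuine cellular decomposition (Lurie's Lemma A.2.8.3) --- but the statement you invoke is exactly the right one and is the known crux of the proof.
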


\begin{prop}
The model category $(\mathcal{E}^I)_{inj}$ is a symmetric monoidal model category
over $Ch_{\mathbb{K}}$.
\end{prop}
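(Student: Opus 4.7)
I would start by unpacking the symmetric monoidal structure of $\mathcal{E}^I$ over $Ch_{\mathbb{K}}$: the internal and external tensor products are taken pointwise, $(F\otimes G)(i)=F(i)\otimes_{\mathcal{E}}G(i)$ and $(C\otimes F)(i)=C\otimes F(i)$, the unit $1_{\mathcal{E}^I}$ is the constant diagram at $1_{\mathcal{E}}$, and the external hom is the end $\int_{i\in I}Hom_{\mathcal{E}}(F(i),G(i))$ in $Ch_{\mathbb{K}}$, from which the adjunction required by Definition 2.1 follows by the universal property of ends. This equips $\mathcal{E}^I$ with the structure of a symmetric monoidal category over $Ch_{\mathbb{K}}$ in the sense of Definition 2.1.

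The crucial structural fact, supplied by Proposition 3.11, is that in $(\mathcal{E}^I)_{inj}$ the cofibrations, the acyclic cofibrations, and the weak equivalences are all detected pointwise. Since pushouts and tensor products in $\mathcal{E}^I$ are also computed pointwise, evaluating at any $k\in I$ identifies the pushout-product $(i_*,j_*)$ formed in $\mathcal{E}^I$ with the pushout-product of $i(k)$ and $j(k)$ formed in $\mathcal{E}$. This reduction principle is the one engine driving the whole argument.

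Axiom MM0 then follows at once: $1_{\mathcal{E}^I}$ is pointwise equal to the cofibrant object $1_{\mathcal{E}}$, hence is cofibrant in $(\mathcal{E}^I)_{inj}$. For axiom MM1 in both the internal and the external cases, take cofibrations $i$ and $j$ in the appropriate categories; pointwise each is a cofibration in $\mathcal{E}$, and MM1 for $\mathcal{E}$ as a symmetric monoidal model category over $Ch_{\mathbb{K}}$ asserts that the pointwise pushout-product is a cofibration in $\mathcal{E}$, acyclic as soon as $i(k)$ or $j(k)$ is. This is exactly the condition defining a (resp. acyclic) cofibration in $(\mathcal{E}^I)_{inj}$, so the pushout-product axiom holds for both tensor structures.

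I do not anticipate any real obstacle, since everything is reduced pointwise to the hypotheses on $\mathcal{E}$. The only points requiring any care are the pointwise characterization of injective cofibrations built into the statement of Proposition 3.11 and the commutation of the evaluation functors $ev_k:\mathcal{E}^I\to\mathcal{E}$ with pushouts and with both tensor products; both are standard consequences of the fact that limits and colimits in $\mathcal{E}^I$ are computed objectwise.
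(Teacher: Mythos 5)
Your proof is correct, and the computational core---reduce everything pointwise, using that injective cofibrations, colimits, and both tensor products in $\mathcal{E}^I$ are all computed objectwise, then invoke MM0 and MM1 for $\mathcal{E}$---is exactly the engine the paper uses. The organization differs in one respect: you construct the external tensor product of $\mathcal{E}^I$ over $Ch_{\mathbb{K}}$ directly, as the pointwise application of the external product of $\mathcal{E}$ over $Ch_{\mathbb{K}}$ (together with the end formula for the external hom), and verify MM1 for it in one pass. The paper instead factors the statement: it first shows $(\mathcal{E}^I)_{inj}$ is a symmetric monoidal model category over $\mathcal{E}$ (via the external product $(E\otimes_e F)(i)=E\otimes F(i)$ with $E\in\mathcal{E}$), and then invokes a separate transitivity lemma (Lemma 3.13), applied to the tower $(\mathcal{E}^I)_{inj}$ over $\mathcal{E}$ over $Ch_{\mathbb{K}}$, to descend to the base. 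The two resulting structures agree, since the transitivity construction yields $C\otimes F=\eta_{\mathcal{E}}(C)\otimes_e F$, which is pointwise the external product you wrote down. Your route is more direct and self-contained; the paper's route is more modular, isolating in Lemma 3.13 a reusable general fact (and placing the burden of MM0/MM1 compatibility of the change-of-base functor $\eta_{\mathcal{E}}$ there rather than in the pointwise check). Your explicit attention to the external hom and its adjunction, which the paper leaves implicit, is a welcome addition rather than a deviation.
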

To prove this we use the following transitivity lemma:
\begin{lem}
Let $\mathcal{E}$ be a symmetric monoidal model category over $\mathcal{C}$ and $\mathcal{D}$ be a
symmetric monoidal model category over $\mathcal{E}$. Then $\mathcal{D}$ is a symmetric monoidal model
category over $\mathcal{C}$.
\end{lem}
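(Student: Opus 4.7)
The plan is to equip $\mathcal{D}$ with the symmetric monoidal category structure over $\mathcal{C}$ obtained by composing the two base-change functors, and then deduce each of the model axioms MM0 and MM1 for this new structure from the corresponding axioms for $\mathcal{E}$ over $\mathcal{C}$ and for $\mathcal{D}$ over $\mathcal{E}$. Denote by $\eta_{\mathcal{E}}:\mathcal{C}\to\mathcal{E}$ and $\eta_{\mathcal{D}}:\mathcal{E}\to\mathcal{D}$ the symmetric monoidal structure functors. Since the $2$-category of symmetric monoidal categories is closed under composition of $1$-morphisms, the composite $\eta:=\eta_{\mathcal{D}}\circ\eta_{\mathcal{E}}:\mathcal{C}\to\mathcal{D}$ is symmetric monoidal, and this supplies $\mathcal{D}$ with the structure of a symmetric monoidal category over $\mathcal{C}$ in the sense of Definition 2.1. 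The associated external tensor product is
\[
C\otimes X \;=\; \eta(C)\otimes_{\mathcal{D}} X \;=\; \eta_{\mathcal{D}}(\eta_{\mathcal{E}}(C))\otimes_{\mathcal{D}} X,
\]
so by definition of the $\mathcal{E}$-external tensor product on $\mathcal{D}$ it coincides with the $\mathcal{E}$-external tensor product of $\eta_{\mathcal{E}}(C)\in\mathcal{E}$ with $X\in\mathcal{D}$. This factorization is the engine of the proof.

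Axiom MM0 for $\mathcal{D}$ over $\mathcal{C}$ is simply cofibrancy of $1_{\mathcal{D}}$, which is inherited from MM0 for $\mathcal{D}$ over $\mathcal{E}$. Similarly, axiom MM1 for the internal tensor product of $\mathcal{D}$ is already part of the hypothesis that $\mathcal{D}$ is symmetric monoidal model over $\mathcal{E}$, so nothing is to be checked there. It remains to verify MM1 for the external tensor product $\otimes:\mathcal{C}\times\mathcal{D}\to\mathcal{D}$.

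The key intermediate step is that $\eta_{\mathcal{E}}$ preserves cofibrations and acyclic cofibrations. I would prove this by taking a cofibration $i:C\rightarrowtail C'$ in $\mathcal{C}$ and forming the pushout-product in $\mathcal{E}$ of $i$ with the cofibration $0\rightarrowtail 1_{\mathcal{E}}$ (which is a cofibration because $1_{\mathcal{E}}$ is cofibrant by MM0 for $\mathcal{E}$ over $\mathcal{C}$). Because the external tensor product is a left adjoint in each variable it preserves the initial object, so $C\otimes 0=C'\otimes 0=0$; using the unit isomorphism $C\otimes 1_{\mathcal{E}}\cong\eta_{\mathcal{E}}(C)$, this pushout-product reduces to the map $\eta_{\mathcal{E}}(i):\eta_{\mathcal{E}}(C)\to\eta_{\mathcal{E}}(C')$. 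Axiom MM1 for the external tensor product of $\mathcal{E}$ over $\mathcal{C}$ then forces $\eta_{\mathcal{E}}(i)$ to be a cofibration in $\mathcal{E}$, acyclic when $i$ is acyclic.

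Given this, MM1 for the external tensor product of $\mathcal{D}$ over $\mathcal{C}$ falls out. Indeed, for cofibrations $i:C\rightarrowtail C'$ in $\mathcal{C}$ and $j:X\rightarrowtail X'$ in $\mathcal{D}$, the pushout-product $(i_*,j_*)$ computed via $\otimes:\mathcal{C}\times\mathcal{D}\to\mathcal{D}$ equals, by the factorization above, the pushout-product of $\eta_{\mathcal{E}}(i)$ with $j$ computed via the external tensor product $\mathcal{E}\times\mathcal{D}\to\mathcal{D}$. Since $\eta_{\mathcal{E}}(i)$ is a cofibration in $\mathcal{E}$ (acyclic if $i$ is) by the previous step, MM1 for $\mathcal{D}$ over $\mathcal{E}$ shows that $(i_*,j_*)$ is a cofibration in $\mathcal{D}$, acyclic as soon as $i$ or $j$ is. This completes the verification. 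The main obstacle is the intermediate claim that $\eta_{\mathcal{E}}$ is left Quillen, but once one sees the trick of pushout-producting with $0\rightarrowtail 1_{\mathcal{E}}$ the rest is essentially bookkeeping.
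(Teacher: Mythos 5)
Your proposal is correct and follows essentially the same route as the paper: define the $\mathcal{C}$-external product on $\mathcal{D}$ by $C\otimes D=\eta_{\mathcal{E}}(C)\otimes_{\mathcal{E}}^{\mathcal{D}}D$, show $\eta_{\mathcal{E}}$ preserves (acyclic) cofibrations using MM0 and MM1 for $\mathcal{E}$ over $\mathcal{C}$, and then conclude MM1 for the composite from MM1 for $\mathcal{D}$ over $\mathcal{E}$. The only difference is that you make explicit the pushout-product with $0\rightarrowtail 1_{\mathcal{E}}$, which the paper leaves implicit.
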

\begin{proof}
We have a symmetric monoidal functor $\eta_{\mathcal{E}}:\mathcal{C}\rightarrow\mathcal{E}$
defined by $\eta_{\mathcal{E}}(C)=C\otimes_{\mathcal{C}}^{\mathcal{E}} 1_{\mathcal{E}}$ where $\otimes_{\mathcal{C}}^{\mathcal{E}}$
is the external tensor product of $\mathcal{E}$ over $\mathcal{C}$ and $1_{\mathcal{E}}$ is the unit of $\mathcal{E}$.
According to axiom MM0 in $\mathcal{E}$, the unit $1_{\mathcal{E}}$ is cofibrant, so  by applying axiom MM1
for the external tensor product we see that $\eta_{\mathcal{E}}$ preserves cofibrations and acyclic cofibrations.

Now, let $\otimes_{\mathcal{E}}^{\mathcal{D}}$ denote the external tensor product of $\mathcal{D}$ over $\mathcal{E}$.
We can define an external tensor product of $\mathcal{D}$ over $\mathcal{C}$ by
\[
C\otimes_{\mathcal{C}}^{\mathcal{D}} D=\eta_{\mathcal{E}}(C)\otimes_{\mathcal{E}}^{\mathcal{D}} D
\]
where $C\in\mathcal{C}$ and $D\in\mathcal{D}$. It satisfies all the recquired axioms of an external tensor product
since $\otimes_{\mathcal{C}}^{\mathcal{E}}$ is an external tensor product and $\eta_{\mathcal{E}}$ is a
symmetric monoidal functor. The external tensor product $\otimes_{\mathcal{E}}^{\mathcal{D}}$ satisfies MM1
and $\eta_{\mathcal{E}}$ preserves cofibrations and acyclic cofibrations, so $\otimes_{\mathcal{C}}^{\mathcal{D}}$
satisfies also MM1.
\end{proof}

\begin{proof}[Proof of Proposition 3.12]
First we have to prove that
the model category $(\mathcal{E}^I)_{inj}$ is a symmetric monoidal model category
for the pointwise tensor product. The pushout-product is a pointwise pushout-product since colimits are created pointwise, and (acyclic) cofibrations are the pointwise (acyclic) cofibrations by definition, so one just has to apply axiom MM1 of $\mathcal{E}$.

Then we claim that $(\mathcal{E}^I)_{inj}$ is a symmetric monoidal model category over $\mathcal{E}$
for the external tensor product defined for every $E\in \mathcal{E}$ and $F\in \mathcal{E}^I$ by
\[
\forall i\in I, (E\otimes_e F)(i)=E\otimes F(i).
\]
To see this,
suppose $f$ is a cofibration of $\mathcal{E}$ and $\phi$ be a cofibration of $(\mathcal{E}^I)_{inj}$.
We form the pushout-product $f\Box \phi$. By definition of the external tensor product, for every $i\in I$
the map $(f\Box \phi)(i)=f\Box \phi(i)$ is a pushout-product of $f$ and $\phi(i)$ in $\mathcal{E}$.
We apply axiom MM1 in $\mathcal{E}$ and use the fact that cofibrations are defined pointwise in
$(\mathcal{E}^I)_{inj}$, so $f\Box \phi$ is a cofibration. If one of these two maps is acyclic
then $f\Box \phi$ is acyclic by the same argument.

Finally we apply the transitivity lemma to $(\mathcal{E}^I)_{inj}$, $\mathcal{E}$ and $Ch_{\mathbb{K}}$ to conclude the proof of this proposition.
\end{proof}

We now need the following homotopy invariance result:
\begin{thm}(Yalin, Theorem 2.12 of \cite{Yal1})
Let $\mathcal{E}$ be a cofibrantly generated symmetric monoidal model
category over $Ch_{\mathbb{K}}$. Let $\varphi:P\stackrel{{\sim}}{\rightarrow}Q$
be a weak equivalence between two cofibrant props defined in $Ch_{\mathbb{K}}$.
This morphism $\varphi$ gives rise to a functor $\varphi^{*}: w(\mathcal{E}^{c})^{Q}\rightarrow w(\mathcal{E}^{c})^{P}$
which induces a weak equivalence of simplicial sets $\mathcal{N}\varphi^{*}:\mathcal{N} w(\mathcal{E}^{c})^{Q}\rightarrow\mathcal{N} w(\mathcal{E}^{c})^{P}$.
\end{thm}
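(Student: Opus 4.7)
The plan is to reduce the claim to a homotopy invariance statement for moduli spaces of algebra structures, assembled over the classification space of underlying cofibrant objects. For each cofibrant $X \in \mathcal{E}^c$, define the moduli space of $P$-algebra structures on $X$ as $P\{X\} := \mathrm{Map}^h_{\mathcal{P}}(P, End_X)$ in the (semi-)model category of props. Because $P$ and $Q$ are cofibrant, $End_X$ is fibrant, and $\varphi$ is a weak equivalence, standard mapping-space invariance yields a natural weak equivalence $\varphi^{\#} : Q\{X\} \stackrel{\sim}{\to} P\{X\}$ for every cofibrant $X$.

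Next I would assemble these moduli spaces over the classification space $\mathcal{N} w \mathcal{E}^c$ of underlying cofibrant objects. The forgetful functor $\mathcal{U}_P : w(\mathcal{E}^c)^P \to w\mathcal{E}^c$ should have as homotopy fiber over $X$ the moduli space $P\{X\}$. After verifying the quasi-fibration hypothesis of Quillen's Theorem B, one obtains a homotopy fiber sequence
\[
P\{X\} \longrightarrow \mathcal{N}w(\mathcal{E}^c)^P \longrightarrow \mathcal{N}w\mathcal{E}^c
\]
over each connected component $[X]$ of the base. Since $\mathcal{N}\varphi^*$ covers the identity on $\mathcal{N}w\mathcal{E}^c$ and restricts on fibers to $\varphi^{\#}$, a long-exact-sequence argument gives a fiberwise weak equivalence, provided $\mathcal{N}\varphi^*$ is also a bijection on $\pi_0$. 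The latter is a transfer-of-structure statement: any $P$-algebra structure $\phi_X : P \to End_X$ on a cofibrant $X$ lifts up to homotopy to a $Q$-algebra structure, by factoring $\varphi$ as a cofibration followed by an acyclic fibration in $\mathcal{P}$ and lifting against the fibrant prop $End_X$.

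The main obstacle lies in verifying the Theorem B hypothesis, because $w(\mathcal{E}^c)^P$ carries no model structure and the quasi-fibrancy condition must be checked by hand. Concretely, for every weak equivalence $f : X \stackrel{\sim}{\to} Y$ of cofibrant objects in $\mathcal{E}$, one must show that the induced zig-zag $P\{X\} \leftarrow P\{f\} \rightarrow P\{Y\}$ through the endomorphism prop $End_f$ of Lemma 3.1 consists of weak equivalences. This reduces to showing that the two projections from $End_f$ are acyclic, which should hold because $X$ and $Y$ are cofibrant and $f$ is a weak equivalence, making the defining pullback of $End_f$ a homotopy pullback of props. Once this point is secured, the remainder is a formal fiber-sequence comparison using $\varphi^{\#}$.
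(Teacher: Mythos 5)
First, a point of order: the paper does not prove this statement at all. It is Theorem 2.12 of \cite{Yal1}, quoted verbatim as the external input that drives the proof of Theorem 3.10; the only ``proof'' in this document is the citation. Your attempt can therefore only be measured against the argument of \cite{Yal1}, whose main technical ingredients (the endomorphism prop $End_f$ of a morphism, Fresse's Proposition 7.1 and his homotopy invariance theorem for $End_f$, see \cite{Fre}) are indeed the same ones you invoke --- but assembled differently.

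On the substance, your outline follows the Rezk-style moduli-space strategy, and its central step is exactly the one you cannot take for granted: the identification of the homotopy fiber of $\mathcal{N}w(\mathcal{E}^c)^P \rightarrow \mathcal{N}w\mathcal{E}^c$ over $X$ with $\mathrm{Map}^h_{\mathcal{P}}(P, End_X)$. In the operadic case this is a theorem of Rezk \cite{Rez1} whose proof uses the model structure on the category of algebras; here, as Section 2 of the paper stresses, $(\mathcal{E}^c)^P$ carries no model structure, has no limits or colimits and no free algebra functor, so this fiber sequence is itself a substantial theorem (it is the subject of separate work of Yalin on moduli spaces of algebraic structures) rather than a hypothesis of Quillen's Theorem B that one ``verifies by hand.'' Concretely, the Theorem~B fiber over $X$ is the nerve of a comma category of $P$-algebras equipped with a weak equivalence to $X$, and relating that nerve to the derived mapping space of props is where essentially all the work lies; your proposal assumes it. Two smaller gaps: (i) to make the defining pullback of $End_f$ a homotopy pullback you need one of $f_*,f^*$ to be a fibration, which does not follow from cofibrancy of $X$ and $Y$ alone --- Fresse's homotopy invariance theorem gets around this by factoring $f$ and requires cofibrant \emph{and fibrant} objects (automatic in $Ch_{\mathbb{K}}$, but $\mathcal{E}$ is a general monoidal model category over $Ch_{\mathbb{K}}$, so this must be addressed); (ii) your $\pi_0$-surjectivity argument produces a $Q$-algebra structure $\psi$ with $\psi\circ\varphi$ merely homotopic to $\phi_X$ as prop morphisms, and you still must show that homotopic structures lie in the same component of $\mathcal{N}w(\mathcal{E}^c)^P$, i.e.\ are linked by a zigzag of weak equivalences of $P$-algebras --- again a statement requiring the $End_f$ machinery. (Had the fiber sequence been established, this surjectivity would in fact follow formally from the long exact sequence and the $\pi_1$-action on $\pi_0$ of the fiber, making the separate lifting argument redundant.)
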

\begin{rem}
If we suppose that $\mathbb{K}$ is of characteristic zero, then this result can be extended to colored props,
i.e. props equiped with a set of colors labeling inputs and outputs of the operations (see Section 3 of \cite{Yal1}).
\end{rem}

\begin{proof}[Proof of Theorem 3.10]
Let $\mathcal{E}$ be a combinatorial symmetric monoidal model category over $Ch_{\mathbb{K}}$.
Let $I$ be a small category and $\mathcal{E}^I$ the category of $I$-diagrams in $\mathcal{E}$.
We equip $\mathcal{E}^I$ with the standard injective model structure.

In the injective model structure, cofibrant diagrams are diagrams of cofibrant objects of $\mathcal{E}$, that is
$((\mathcal{E}^I)_{inj})^c=((\mathcal{E}^c)^I)_{inj}$.
We apply Theorem 3.14 to $(\mathcal{E}^I)_{inj}$.
A weak equivalence of cofibrant dg props $\varphi:P\stackrel{\sim}{\rightarrow}Q$ induces a weak equivalence
of classification spaces
\[
\mathcal{N}w((\mathcal{E}^c)^Q)^I=\mathcal{N}w((\mathcal{E}^c)^I)^Q\stackrel{\sim}{\rightarrow}
\mathcal{N}w((\mathcal{E}^c)^I)^P=\mathcal{N}w((\mathcal{E}^c)^P)^I,
\]
in particular for $I=[n]$ and for every $n\geq 0$, giving rise to the desired Reedy weak equivalence of
bisimplicial sets.
\end{proof}

\begin{cor}
Let $P$ be either any dg prop over a field of characteristic zero, or a prop with non-empty inputs over a field
of any characteristic.
The homotopy theory of homotopy $P$-algebras does not depend, up to Dwyer-Kan equivalence, on the choice of a cofibrant resolution of $P$.
\end{cor}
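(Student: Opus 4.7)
The plan is to deduce this corollary directly from Theorem 3.2 by comparing any two cofibrant resolutions of $P$ through a single weak equivalence inside the (semi-)model category of dg props.

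Given two cofibrant resolutions $q : Q \xrightarrow{\sim} P$ and $q' : Q' \xrightarrow{\sim} P$, the first step is to arrange them, without loss of generality, so that $q'$ is a trivial fibration. This is harmless because the model structure on dg props recalled in Section 2 (respectively the semi-model structure on props with non-empty inputs in the positive characteristic case) produces such resolutions via the small object argument applied to the generating acyclic cofibrations. The lifting axiom against the trivial fibration $q'$ with cofibrant source $Q$ then yields a morphism $\varphi : Q \to Q'$ making the triangle over $P$ commute, and the two-out-of-three property for weak equivalences forces $\varphi$ to be a weak equivalence between cofibrant dg props.

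Applying Theorem 3.2 to $\varphi$ produces a Dwyer-Kan equivalence
\[
L^H(\varphi^*) : L^H((\mathcal{E}^c)^{Q'}, w(\mathcal{E}^c)^{Q'}) \xrightarrow{\sim} L^H((\mathcal{E}^c)^{Q}, w(\mathcal{E}^c)^{Q})
\]
between the hammock localizations of the two associated categories of algebras, which is exactly the statement that the homotopy theory of homotopy $P$-algebras is independent of the chosen cofibrant resolution up to Dwyer-Kan equivalence. The colored characteristic zero case is handled verbatim, invoking Remark 3.15 to extend the ingredients of Theorem 3.2 to colored dg props.

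I do not foresee any genuine obstacle: the argument is a routine application of a model categorical lifting followed by the main theorem of the paper. The one point requiring mild care is in positive characteristic, where the semi-model structure on props with non-empty inputs is used in place of a full model structure, but the lifting axiom against trivial fibrations from cofibrant sources still holds in that setting, so the construction of $\varphi$ remains valid and the remainder of the argument goes through unchanged.
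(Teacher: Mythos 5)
Your proposal is correct and is essentially the argument the paper intends (the corollary is stated as an immediate consequence of Theorem 3.2, with no further proof given): connect any two cofibrant resolutions by a weak equivalence of cofibrant props via lifting against a trivial fibration, then invoke Theorem 3.2. The only point to tighten is the ``without loss of generality'' reduction: if $q'$ is merely a weak equivalence rather than a trivial fibration, factor it as a trivial cofibration followed by a trivial fibration $Q'\rightarrowtail R\twoheadrightarrow P$ (so $R$ is still cofibrant), lift $q$ through $R\twoheadrightarrow P$, and apply Theorem 3.2 twice, composing the two Dwyer--Kan equivalences.
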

Recall that according to Theorem 1.1 of \cite{JY}, the model category structure on props can be extended to colored props when we work over a field of characteristic zero, so we also get:
\begin{cor}
Let $P$ be a differential graded colored prop over a field of characteristic zero.
The homotopy theory of homotopy $P$-algebras does not depend, up to Dwyer-Kan equivalence, on the choice of a cofibrant resolution of $P$.
\end{cor}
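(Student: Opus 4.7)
The plan is to reduce the corollary directly to the colored version of Theorem 3.2. The one extra ingredient needed beyond that theorem is the existence, over a field of characteristic zero, of a cofibrantly generated model structure on dg colored props; this is precisely the content of Theorem 1.1 of \cite{JY}, and it is what supplies the factorization and lifting tools required to compare arbitrary cofibrant resolutions.

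Concretely, let $Q_1 \stackrel{\sim}{\to} P$ and $Q_2 \stackrel{\sim}{\to} P$ be two cofibrant resolutions of $P$ in dg colored props. Factoring $\emptyset \to P$ as a cofibration followed by an acyclic fibration, I produce a third cofibrant resolution $p: Q \to P$ whose structure map is itself an acyclic fibration. Since each $Q_i$ is cofibrant and $p$ is an acyclic fibration, the lifting axiom yields prop morphisms $\varphi_i: Q_i \to Q$ over $P$, and the two-out-of-three property forces each $\varphi_i$ to be a weak equivalence of cofibrant dg colored props.

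I then apply the colored version of Theorem 3.2 to each $\varphi_i$, obtaining Dwyer-Kan equivalences between the simplicial localization of $Q$-algebras (with its weak equivalences) and that of $Q_i$-algebras, for $i = 1, 2$. Composing one such equivalence with a homotopy inverse of the other produces the desired Dwyer-Kan equivalence between the homotopy theories of $Q_1$-algebras and $Q_2$-algebras, independent of the choice of resolution.

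The only nontrivial point, which lies in the body of the paper rather than in the corollary itself, is the assertion that Theorem 3.2 holds for colored props in characteristic zero. This reduces, through the chain of steps carried out in Section 3, to the colored analogue of Theorem 3.14, whose validity in characteristic zero is exactly the content of Remark 3.15. Beyond this verification, no serious obstacle arises: the corollary is a formal consequence of the main theorem together with standard model-categorical manipulations of cofibrant resolutions.
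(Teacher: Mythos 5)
Your proposal is correct and follows essentially the same route as the paper: the corollary is deduced from the colored case of Theorem 3.2 (which rests on Remark 3.15 for the colored version of Theorem 3.14) together with the existence of the model structure on dg colored props in characteristic zero from Theorem 1.1 of \cite{JY}. The paper leaves the model-categorical comparison of two cofibrant resolutions implicit, whereas you spell out the standard factorization-and-lifting argument producing the zigzag of weak equivalences $Q_1 \to Q \leftarrow Q_2$; this is exactly the intended content and introduces no new ideas or gaps.
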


Examples in the $1$-colored case include the list of generalized bialgebras described in \cite{Lod}, for instance Hopf algebras, non-commutative Hopf algebras, Lie bialgebras, etc. These bialgebras are determined by a distributive law between operations and cooperations.
Other examples of bialgebras which are not of this kind include Frobenius bialgebras and more general topological field theories.
Typical examples of colored props are the one encoding diagrams of algebras (see \cite{FMY}), so this is an invariance result in particular for homotopy diagrams of various kinds of  homotopy algebras.

\end{document}